\documentclass[reqno]{amsart}

\usepackage{amsmath}
\usepackage{amsfonts}
\usepackage{amssymb,enumerate}
\usepackage{amsthm}
\usepackage[all]{xy}
\usepackage{rotating}
\usepackage{hyperref}
\usepackage{color}

\theoremstyle{plain}
\newtheorem{lem}{Lemma}[section]
\newtheorem{cor}[lem]{Corollary}
\newtheorem*{mainthm*}{Main Theorem}
\newtheorem{prop}[lem]{Proposition}

\newtheorem{intthm}{Theorem}

\theoremstyle{definition}

\newtheorem{ex}[lem]{Example}

\newtheorem{disc}[lem]{Remark}

\newtheorem*{convention*}{Convention}

\newcommand{\id}{\operatorname{id}}

\newcommand{\depth}{\operatorname{depth}}

\newcommand{\soc}{\operatorname{Soc}}

\newcommand{\coker}{\operatorname{Coker}}

\newcommand{\im}{\operatorname{Im}}

\newcommand{\End}{\operatorname{End}}

\newcommand{\Ker}{\operatorname{Ker}}

\newcommand{\ideal}[1]{\mathfrak{#1}}
\newcommand{\m}{\ideal{m}}

\newcommand{\fm}{\ideal{m}}

\newcommand{\xra}{\xrightarrow}

\renewcommand{\geq}{\geqslant}
\renewcommand{\leq}{\leqslant}
\renewcommand{\ker}{\Ker}

\newcommand{\Hom}{\operatorname{Hom}}

\def\soc{\operatorname{Soc}}

\numberwithin{equation}{lem}

\begin{document}

\bibliographystyle{amsplain}

\title[Irreducible homomorphisms to/from free and injective modules]{Structure of irreducible homomorphisms\\ to/from free and injective modules}

\author{Saeed Nasseh}
\address{Department of Mathematical Sciences\\
Georgia Southern University\\
Statesboro, GA 30460, U.S.A.}
\email{snasseh@georgiasouthern.edu}
\urladdr{https://cosm.georgiasouthern.edu/math/saeed.nasseh}




\keywords{Artinian module, complete local ring, direct sum, direct summand, discrete valuation ring, finitely generated module, indecomposable module, injective envelope, irreducible homomorphism, Matlis dual, Socle, split monomorphism, split epimorphism.}
\subjclass[2020]{13C05, 13C10, 13C11, 13C60}

\begin{abstract}
Let $R$ be a commutative noetherian local ring. We extend the work of the author and Takahashi~\cite{NT} by investigating the structure of irreducible monomorphisms originating from free modules in the category of finitely generated $R$-modules. In the case where $R$ is complete, we further study the structure of irreducible monomorphisms and epimorphisms to and from injective modules in the category of artinian $R$-modules.
\end{abstract}

\maketitle


\section{Introduction}

\begin{convention*}
Throughout the paper, $(R,\fm,k)$ is a commutative noetherian local ring. When we say $R$ is complete, we mean it is complete in the $\fm$-adic topology. We write $E=E_R(k)$ for the injective envelope of $k$. For an $R$-module $M$ and a positive integer $n$, by $M^{\oplus n}$ we denote the direct sum $\bigoplus_{i=1}^nM$ and $M^{\vee}=\Hom_R(M,E)$ is the Matlis dual of $M$. Moreover, $\mu_R(M)$ and $\soc_R(M)$ denote the minimal number of generators of $M$ and the socle of $M$, respectively. We simply write $\soc(R)$ for $\soc_R(R)$. Finally, the quotient $E/\soc_R(E)$ will be denoted by $\overline{E}$.
\end{convention*}

An $R$-module homomorphism $f\colon M\to N$ is called \emph{irreducible} if $f$ is neither a split monomorphism nor a split epimorphism, and for every factorization
$$
\xymatrix{
&L\ar[rd]^{h}&\\
M\ar[ru]^{g}\ar[rr]^{f}&&N
}
$$
of $f$ we have $g$ is a split monomorphism or $h$ is a split epimorphism; references on irreducible homomorphisms and related topics include~\cite{ARS, roger, yoshino}.\vspace{2mm}

The structure of irreducible monomorphisms to and irreducible epimorphisms from free modules in the category of finitely generated $R$-modules has been completely determined by Nasseh and Takahashi~\cite[Theorems A and B]{NT}. On the other hand, irreducible epimorphisms onto free $R$-modules cannot exist since any such epimorphism necessarily splits. In this paper, we complete this picture by studying the structure of irreducible monomorphisms from free modules in the category of finitely generated $R$-modules, which constitutes the only remaining case. Our main result in this direction is the following theorem, whose proof is given in Section~\ref{sec05062016a}.

\begin{intthm}\label{thm20260913a}
Let $(R,xR)$ be a discrete valuation ring and $\phi\colon R\to M$ be an irreducible monomorphism in the category of finitely generated $R$-modules. Then, there exists a commutative diagram
\begin{equation}\label{eq20260913c}
\xymatrix{
R\ar[rr]^{\phi}\ar@{=}[d]&&M\ar[d]^-{\theta}\\
R\ar[rr]^-{\varpi}&&R
}
\end{equation}
in which $\theta$ is an isomorphism and $\varpi$ is the multiplication by $x$.
\end{intthm}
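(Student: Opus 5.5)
The plan is to use the structure theorem for finitely generated modules over a DVR and rule out every possibility except $M\cong R$ with $\phi$ equal to multiplication by a unit times $x$. Write $M\cong R^{\oplus n}\oplus T$ with $T=\bigoplus_j R/x^{a_j}R$ torsion. Since $\phi$ is a monomorphism, $\phi(1)$ is not torsion, so $n\geq 1$. Put $\phi(1)=(r_1,\dots,r_n,t)$.

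\textbf{Step 1: reduce to $n=1$.} The ideal $(r_1,\dots,r_n)$ is $x^bR$ for some $b\geq 0$. Apply an automorphism of $R^{\oplus n}$ (elementary column operations over the PID $R$) to bring $(r_1,\dots,r_n)$ to $(x^b,0,\dots,0)$. Then $\phi$ factors through the summand $R\oplus T$, and the composite $R\to R\oplus T\hookrightarrow M$ has a split monomorphism as its second map. Irreducibility then forces $R\to R\oplus T$ to be a split monomorphism, which would make $\phi$ split. This is impossible unless $n=1$, since $R\oplus T$ is a proper summand exactly when $n>1$. So $n=1$ and $\phi(1)=(x^b,t)$.

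\textbf{Step 2: kill the torsion.} If $b=0$, the map $R\to M$, $1\mapsto(1,t)$, is split by the projection to $R$, contradicting irreducibility. So $b\geq 1$. If $T\neq 0$, factor $\phi$ as
\[
R\xrightarrow{x}R\xrightarrow{h}R\oplus T,\qquad h(1)=(x^{b-1},t'),
\]
when $t=xt'$ is possible. Here $R\xrightarrow{x}R$ is not split mono, so $h$ must be split epi; but $R\to R\oplus T$ with $T\neq0$ cannot be surjective, a contradiction. The case $t\notin xT$ needs a different factorization. Use the other coordinate instead: factor $\phi$ through $R\oplus T\xrightarrow{\alpha}R\oplus T$ where $\alpha$ kills or modifies the torsion part. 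Alternatively, apply an automorphism $(r,s)\mapsto(r,s-r\,u)$ with $u\in T$ (any $\psi\colon R\to T$ is allowed) to change $t$ to $t-x^bu$. This normalizes $t$ modulo $x^bT$.

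I expect this torsion elimination to be the main obstacle. My first attempt is the following. If $t\notin xT$, then some component $R/x^aR$ has $t_j$ a unit multiple of $1$. Then $\phi(1)$ generates a cyclic submodule whose projection to that summand is onto. A direct computation should show that $\phi$ factors as $R\to R\oplus R/x^a\to M$, or that $\phi$ composed with a projection gives a splitting onto $R/x^a$ that contradicts irreducibility. Concretely, consider the factorization $R\xrightarrow{g}R\oplus R\xrightarrow{h}M$ with $g(1)=(1,1)$ and $h=(\,(x^b,0),(0,t)\,)$. Here $g$ is split mono, so this factorization gives no contradiction and must be discarded. Instead use $g(1)=(x^b,1)$ into $R\oplus R$ and $h(r,s)=(r,st)$. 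This $g$ is split mono (project to the second coordinate), so it does not help either.

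A better route is the pushout. If $h\colon R\oplus R\to M$ with $h(r,s)=(r,st)$, then $h$ is surjective when $T=Rt$ is cyclic, and $h$ splits only if $T$ is free, so $T=0$. So I would reduce to $T$ cyclic generated by $t$ (discarding other summands as in Step 1), then take $g(1)=(x^b,1)$ and argue more carefully.

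I will settle Step 2 by showing that if $T\neq 0$, then $M$ decomposes as $M'\oplus T''$ with $\phi$ landing in a proper summand after an automorphism, which returns us to the Step 1 argument.

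\textbf{Step 3: $b=1$ and the conclusion.} Once $M=R$, $\phi$ is multiplication by $x^b$ with $b\geq1$. If $b\geq 2$, the factorization $x^b=x\cdot x^{b-1}$ through $R$ has neither factor split, a contradiction. So $b=1$, and taking $\theta=\id$ (composed with the isomorphism $M\cong R$) gives the commutative diagram~\eqref{eq20260913c}.
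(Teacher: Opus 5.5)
\textbf{There is a genuine gap: the case $t\notin xT$ is left open.} Your Step 1, the cases $b=0$ and $t\in xT$ of Step 2, and Step 3 are correct and match the paper's Steps 1--5; you postpone $b=1$ until $M\cong R$, which is harmless. The case $t\notin xT$ is the heart of the theorem, and none of the routes you sketch can close it.

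Take $M=R\oplus R/(x^c)$ and $\phi(1)=(x^b,\bar 1)$ with $b,c\geq 1$. Then $\phi(1)$ lies in no proper direct summand of $M$:
\begin{itemize}
\item a summand isomorphic to $R/(x^c)$ is torsion;
\item a summand isomorphic to $R$ is a complement of $\tau(M)=0\oplus R/(x^c)$, hence of the form $R(1,\bar s)$. But $(x^b,\bar 1)\in R(1,\bar s)$ would force $\bar 1=x^b\bar s$ in $R/(x^c)$, which is impossible.
\end{itemize}
So your plan of making $\phi$ land in a proper summand and returning to Step 1 fails exactly here.

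Factorizations through free modules, like your attempts through $R\oplus R$, fail as well. If $L$ is free and $g\colon R\to L$ is not a split monomorphism, then $g(1)\in xL$. Hence $\phi(1)=hg(1)\in xM$, contradicting $t\notin xT$. This is why every $g$ you tried came out split.

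\textbf{The missing idea is to factor through a module with more torsion.} The paper (Step 8) takes $L=R\oplus R/(x^{c+1})$, $g(1)=(x^b,\bar 1)$ and $h=\id_R\oplus\pi_c$, where $\pi_c\colon R/(x^{c+1})\to R/(x^c)$ is the natural surjection.
\begin{itemize}
\item $g$ is not a split monomorphism: every $\varrho\colon L\to R$ kills torsion, so $\varrho g(1)=x^b\varrho(1,0)\in(x)$.
\item $h$ is not a split epimorphism: a section $\sigma$ would send $(0,\bar 1)$ to a torsion element killed by $x^c$, i.e. into $0\oplus (x)/(x^{c+1})$. Then $h\sigma(0,\bar 1)\in 0\oplus(x)/(x^c)$ cannot equal $(0,\bar 1)$.
\end{itemize}

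The same argument works with any non-split surjection $p\colon\tilde T\to T$ of torsion modules, e.g. $\bigoplus_j R/(x^{a_j+1})\to\bigoplus_j R/(x^{a_j})$. Set $g(1)=(x^b,\tilde t)$ for a lift $\tilde t$ of $t$ and $h=\id_R\oplus p$. A section of $h$ maps $T$ into $\tilde T$ and would therefore split $p$. This rules out $T\neq 0$ for every $b\geq 1$ at once, with no case split on $t$ and no reduction to cyclic $T$.

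That reduction, as you phrased it, would need care anyway. First, $t\notin xT$ does not make $Rt$ a direct summand of $T$: for $T=R/(x)\oplus R/(x^3)$ and $t=(\bar 1,\bar x)$ one gets $Rt\cong R/(x^2)$. Second, your automorphism only moves $t$ modulo $x^bT$, not modulo $xT$.
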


In Section~\ref{sec20260919a}, using Matlis duality extensively, we establish the dual versions of the results in~\cite{NT} and Theorem~\ref{thm20260913a}. We first turn our attention to the case where we have an irreducible epimorphism from an injective module in the category of artinian $R$-modules, obtaining the following result.

\begin{intthm}\label{thm20260919a}
Assume that $R$ is complete. Then, the following assertions hold.
\begin{enumerate}[\rm(a)]
\item
Let $J'$ be a non-zero $R$-module that is a proper direct summand of $\overline{E}$. Then, the composition $\delta'\colon E\xra{\theta'} \overline{E}\to J'$ of the natural surjections is an irreducible epimorphism in the category of artinian $R$-modules.
\item
Let $\psi\colon I\to M$ be an irreducible epimorphism in the category of artinian $R$-modules with $I$ injective and $\soc_R(I)\subseteq \ker(\psi)$. Then, the following hold.
\begin{enumerate}[\rm(b1)]
\item
The $R$-module $M$ is isomorphic to a direct summand of $\overline{E}$.
\item
If $M$ is indecomposable, then for every monomorphism $\iota\colon E\to I$ there exists a split epimorphism $\eta'\colon \overline{E}\to M$ such that the diagram
\begin{equation}\label{eq20260915d}
\xymatrix{
E\ar[r]^{\theta'}\ar[d]_-{\iota}&\overline{E}\ar[d]^-{\eta'}\\
I\ar[r]^{\psi}&M
}
\end{equation}
commutes in the category of artinian $R$-modules.
\end{enumerate}
\end{enumerate}  
\end{intthm}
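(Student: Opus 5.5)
The plan is to deduce Theorem~\ref{thm20260919a} from the corresponding results of~\cite{NT} on irreducible monomorphisms into free modules, using Matlis duality. Since $R$ is complete, $(-)^{\vee}$ gives a contravariant equivalence between finitely generated and artinian $R$-modules, with $M\cong M^{\vee\vee}$ naturally.

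\textbf{Step 1: formal properties of $(-)^{\vee}$.}
\begin{enumerate}[\rm(i)]
\item A homomorphism $f$ is a split monomorphism if and only if $f^{\vee}$ is a split epimorphism, and conversely, because $(-)^{\vee}$ is an additive anti-equivalence.
\item Factorizations $f=hg$ correspond bijectively to factorizations $f^{\vee}=g^{\vee}h^{\vee}$. Together with (i), this shows that $f$ is irreducible in artinian modules if and only if $f^{\vee}$ is irreducible in finitely generated modules.
\item Injectivity and surjectivity are interchanged, since $E$ is an injective cogenerator.
\item The finitely generated injectives in the dual category are $I\cong E^{\oplus n}$, and $I^{\vee}\cong R^{\oplus n}$.
\end{enumerate}

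\textbf{Step 2: the dictionary.} Apply $(-)^{\vee}$ to $0\to k\to E\xra{\theta'}\overline{E}\to 0$. This gives $0\to \overline{E}^{\vee}\to R\to k\to 0$, so $\overline{E}^{\vee}\cong\fm$ and $\theta'^{\vee}$ is the inclusion $\fm\hookrightarrow R$.

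More generally, for $\psi\colon I\to M$, the condition $\soc_R(I)\subseteq\ker(\psi)$ is equivalent to $\psi$ factoring through $I/\soc_R(I)$. Since $\soc_R(I)^{\vee}=(I^{\vee})\otimes k$, this translates into $\psi^{\vee}\colon M^{\vee}\to F=I^{\vee}$ having image inside $\fm F$. I will verify this by dualizing $0\to \soc(I)\to I\to I/\soc(I)\to0$: this gives $(I/\soc I)^{\vee}=\fm F$.

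Direct summands also correspond: $J'$ is a proper nonzero summand of $\overline{E}$ if and only if $J'^{\vee}$ is a proper nonzero summand of $\fm$.

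\textbf{Step 3: translating the theorems.}
\begin{itemize}
\item For (a), the dual of $\delta'$ is the composition $J'^{\vee}\hookrightarrow\fm\hookrightarrow R$ of the summand inclusion with the natural inclusion. The monomorphism part of~\cite[Theorem A]{NT} says this composition is an irreducible monomorphism; that theorem is where proper nonzero direct summands of $\fm$ mapping into $R$ are shown to give irreducible maps. Step 1 then gives that $\delta'$ is an irreducible epimorphism.
\item For (b1), $\psi^{\vee}\colon M^{\vee}\to F$ is an irreducible monomorphism with image in $\fm F$. The cited theorem then gives that $M^{\vee}$ is isomorphic to a direct summand of $\fm$, and dualizing gives (b1).
\item For (b2), a monomorphism $\iota\colon E\to I$ dualizes to an epimorphism $\iota^{\vee}\colon F\to R$; this splits, so it is the projection onto a free rank-one summand. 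If $M$ is indecomposable, so is $M^{\vee}$. The statement in~\cite{NT} giving a split monomorphism $\eta\colon M^{\vee}\to\fm$ with $\iota^{\vee}\circ\psi^{\vee}=(\fm\hookrightarrow R)\circ\eta$ dualizes to the commutative square~\eqref{eq20260915d} with $\eta'=\eta^{\vee}$.
\end{itemize}

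\textbf{Main obstacle.} The main obstacle is matching the precise hypotheses of~\cite[Theorems A and B]{NT} with their duals. The key points are the identification of $\soc_R(I)\subseteq\ker\psi$ with $\im\psi^{\vee}\subseteq\fm F$, and that an arbitrary monomorphism $\iota$ dualizes to an arbitrary surjection onto $R$, so that the ``for every'' quantifier in (b2) matches the one in~\cite{NT}. If~\cite{NT} only supplies the diagram for a specific projection, I would first reduce to that case. This uses the fact that any two split surjections $F\to R$ differ by an automorphism of $F$, and such an automorphism can be absorbed into $\psi$, since irreducibility and the condition $\im\subseteq\fm F$ are preserved under composing with automorphisms.
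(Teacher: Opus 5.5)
Your proposal is correct and follows the paper's proof essentially step for step. You dualize $\delta'$ (respectively $\psi$) to an irreducible monomorphism into a free module, translate $\soc_R(I)\subseteq\ker(\psi)$ into $\im(\psi^{\vee})\subseteq\fm F$, apply~\cite[Theorem A]{NT}, and dualize back using $\fm^{\vee}\cong\overline{E}$. The paper handles the socle condition in Lemma~\ref{lem20260915b} by dualizing $F\to F/\fm F$ rather than $I\to I/\soc_R(I)$, and it applies~\cite[Theorem A(b)]{NT} directly to the arbitrary split epimorphism $\iota^{\vee}$, so your fallback reduction to a fixed projection is a harmless safeguard rather than a needed step.
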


The next result considers the case where we have an irreducible monomorphism to an injective module in the category of artinian $R$-modules.

\begin{intthm}\label{thm20260917a}
Assume that $R$ is complete and let $\psi\colon M\to I$ be an irreducible monomorphism in the category of artinian $R$-modules with $I\cong E^{\oplus n}$ for some positive integer $n$. Then, the following assertions hold.
\begin{enumerate}[\rm(a)]
\item
The cokernel of $\psi$ is isomorphic to $k$.
\item
Assume that $\End_R(M)$ is a local ring, and let $\pi\colon I\to E$ be a split epimorphism. Then, the following hold.
\begin{enumerate}[\rm(b1)]
\item
The composition $\pi \psi\colon M\to E$ is irreducible in the category of artinian $R$-modules and hence, it is either surjective or injective.
\item
If $\pi \psi$ is injective, then $\dim_k(\soc(R))=1$, $n=1$ (that is, there exists an isomorphism $\beta\colon I\to E$), and there is a commutative diagram
\begin{equation}\label{eq20260917a}
\xymatrix{
M\ar[r]^{\psi}\ar[d]_-{\alpha}&I\ar[d]^-{\beta}\\
N\ar[r]^{\varsigma}&E
}
\end{equation}
in which $N$ is a maximal submodule of $E$ (that is, $E/N\cong k$), the map $\varsigma$ is the natural inclusion, and $\alpha$ is an isomorphism.
\end{enumerate}
\end{enumerate}
\end{intthm}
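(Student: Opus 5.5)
The plan is to transport everything to finitely generated modules over the complete ring $R$ via Matlis duality and then invoke \cite[Theorems A and B]{NT}. Since $R$ is complete, $(-)^{\vee}$ is an exact anti-equivalence between artinian and finitely generated $R$-modules, with $E^{\vee}\cong R$. It exchanges monomorphisms with epimorphisms and split ones with split ones. It also preserves irreducibility, because factorizations of $\psi$ correspond bijectively to factorizations of $\psi^{\vee}$ up to isomorphism. So $\psi^{\vee}\colon R^{\oplus n}\to M^{\vee}$ is an irreducible epimorphism from a free module in the category of finitely generated modules.

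For (a), \cite[Theorem B]{NT} (irreducible epimorphisms from free modules) should give $\ker(\psi^{\vee})\cong k$. I expect the form to be: $\ker\psi^{\vee}$ is a simple module, a one-dimensional subspace of $\soc(R^{\oplus n})$ not contained in $\fm R^{\oplus n}$-type obstructions. Dualizing the exact sequence $0\to K\to R^{\oplus n}\to M^{\vee}\to 0$ gives $0\to M\to E^{\oplus n}\to K^{\vee}\to 0$, so $\coker\psi\cong K^{\vee}\cong k^{\vee}\cong k$.

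For (b1), I use that $\End_R(M)^{\mathrm{op}}\cong\End_R(M^{\vee})$ is local, so $M^{\vee}$ is indecomposable. The split epimorphism $\pi$ dualizes to a split monomorphism $\pi^{\vee}\colon R\to R^{\oplus n}$. Then $(\pi\psi)^{\vee}=\psi^{\vee}\pi^{\vee}$ is the restriction of the irreducible epimorphism to a free rank-one summand. I would check directly that this restriction is irreducible. By the dual irreducibility criterion (irreducible composites with a split map when the target is indecomposable), or by the explicit description in \cite{NT}, the composite $R\to M^{\vee}$ is either surjective or injective and not split. Dualizing back, $\pi\psi$ is irreducible, and it is surjective or injective accordingly.

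For (b2): if $\pi\psi$ is injective, then $(\pi\psi)^{\vee}\colon R\to M^{\vee}$ is a surjective irreducible map, hence an irreducible epimorphism $R\to M^{\vee}$ with kernel $\cong k$ by (a) applied to $n=1$. Comparing with the original $\psi^{\vee}$, whose kernel is also $\cong k$, length counting $\ell(R^{\oplus n})-1=\ell(M^{\vee})=\ell(R)-1$ forces $n=1$ (if $R$ is artinian). For non-artinian $R$, I instead use the rank/$\mu$ comparison $\mu(M^{\vee})\ge n$ from \cite[Theorem B]{NT}. This gives $\beta$. Next, $(\pi\psi)^{\vee}$ being irreducible with kernel $S\cong k\subseteq\soc(R)$ should, by \cite{NT}, force $\soc(R)$ to be one-dimensional. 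The idea is that otherwise one factors $R\to R/S\to R/S'$ through a larger quotient non-trivially. Then $M\cong\ker(E\to k)$, which is a submodule $N$ with $E/N\cong k$, giving the diagram \eqref{eq20260917a}.

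The main obstacle is the step establishing $\dim_k\soc(R)=1$ and $n=1$. This requires extracting the precise statement of \cite[Theorem B]{NT} and matching the surjectivity of the restricted map with those constraints. I would try the factorization argument through $R/\soc(R)$ first.
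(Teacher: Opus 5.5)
Your overall strategy matches the paper's: dualize to the irreducible epimorphism $\psi^{\vee}\colon R^{\oplus n}\to M^{\vee}$ and get (a) from \cite[Theorem B(a)]{NT}. For (b1), the component lemma you invoke is a valid substitute for citing \cite[Theorem B(b1)]{NT}: if $(f_1,f_2)\colon A_1\oplus A_2\to B$ is irreducible and $\End_R(B)$ is local, then $f_1$ is irreducible.

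In (b2) you are more direct than the paper on two points. First, $n=1$ follows because $M^{\vee}$ is cyclic and $\mu_R(M^{\vee})=n$. The equality $\mu_R(M^{\vee})=n$ holds because $\ker\psi^{\vee}\cong k$ must lie inside $\fm R^{\oplus n}$, not outside it as your description of (a) suggests. Otherwise it would be a free summand, so $R$ would be a field, and over a field nothing is irreducible. Second, once $n=1$, part (a) gives the diagram at once with $N=\im(\beta\psi)$. This avoids the paper's detour through $(R/\soc(R))^{\vee}$ and the uniqueness of the maximal submodule of $E$.

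The gap is the claim $\dim_k(\soc(R))=1$, and the mechanism you propose cannot work. Since $S=\ker(\pi\psi)^{\vee}$ is simple, the only quotients $R/I$ through which $R\to R/S$ factors have $I=0$ or $I=S$, and both factorizations are trivial. Your $R\to R/S\to R/S'$ is a factorization of a different map. You have two ways to close the gap.
\begin{enumerate}[\rm(i)]
\item Cite \cite[Theorem B(b2)]{NT}, which gives that $R$ has type $1$. Then note that $\depth(R)=0$, because $k\hookrightarrow R^{\oplus n}$, so the type equals $\dim_k(\soc(R))$. This is what the paper does.
\item Factor through a module that is not a quotient of $R$. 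Write $S=Rs$ and suppose some $t\in\soc(R)$ satisfies $t\notin Rs$. Let $L=(R\oplus R)/R(s,t)$, and define $g(r)=\overline{(r,0)}$ and $h(\overline{(a,b)})=\bar a\in R/Rs$. Then $hg$ is the natural surjection $R\to R/Rs$. A retraction of $g$ would force $s\in Rt$. A section of $h$ would force $t\in Rs$; here one uses $s^2=0$ and $\fm t=0$. Since $Rs$ and $Rt$ are simple, either conclusion gives $Rs=Rt$, which is a contradiction. So neither map splits, and $(\pi\psi)^{\vee}$ is not irreducible.
\end{enumerate}
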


Finally, we establish the following result concerning the irreducible epimorphisms to injective modules in the category of artinian $R$-modules. Note that irreducible monomorphisms from injective $R$-modules cannot exist since any such monomorphism necessarily splits.

\begin{intthm}\label{thm20260917c}
Let $(R,xR)$ be a complete discrete valuation ring and $\psi\colon M\to E$ be an irreducible epimorphism in the category of artinian $R$-modules. Then, there exists a commutative diagram
\begin{equation}\label{eq20260917e}
\xymatrix{
M\ar[rr]^{\psi}\ar[d]_-{\theta}&&E\ar@{=}[d]\\
E\ar[rr]^-{\varpi'}&&E
}
\end{equation}
in which $\theta$ is an isomorphism and $\varpi'$ is the multiplication by $x$.
\end{intthm}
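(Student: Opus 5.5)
Since $R$ is complete, Matlis duality $(-)^{\vee}=\Hom_R(-,E)$ is an anti-equivalence between the category of artinian $R$-modules and the category of finitely generated $R$-modules, with $R^{\vee}\cong E$ and $E^{\vee}\cong R$. I would therefore dualize $\psi$ and apply Theorem~\ref{thm20260913a}.

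First, I would check that irreducibility passes through the duality. An anti-equivalence interchanges split monomorphisms and split epimorphisms, and it sends factorizations $M\to L\to E$ to factorizations $E^{\vee}\to L^{\vee}\to M^{\vee}$. Every finitely generated module is the dual of an artinian one, so every factorization of $\psi^{\vee}$ arises this way. It follows that $\psi^{\vee}\colon E^{\vee}\to M^{\vee}$ is irreducible in the category of finitely generated $R$-modules. Also, $\Hom_R(-,E)$ is exact because $E$ is injective, so surjectivity of $\psi$ gives that $\psi^{\vee}$ is a monomorphism.

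Next, fix the isomorphism $\epsilon\colon R\to E^{\vee}=\End_R(E)$, $r\mapsto$ multiplication by $r$. This is an isomorphism since $R$ is complete. Then $\phi:=\psi^{\vee}\epsilon\colon R\to M^{\vee}$ is an irreducible monomorphism, and Theorem~\ref{thm20260913a} provides an isomorphism $\theta_0\colon M^{\vee}\to R$ with $\theta_0\psi^{\vee}\epsilon=\varpi$, where $\varpi$ is multiplication by $x$ on $R$.

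Finally, I would dualize back. Write $\omega_M\colon M\to M^{\vee\vee}$ for the natural isomorphism. Under $\epsilon^{\vee}$ composed with $\omega_E$, the module $E$ is identified with $R^{\vee}$, and the dual of multiplication by $x$ on $R$ is multiplication by $x$ on $R^{\vee}\cong E$, that is, $\varpi'$. Set
\[
\theta:=(\epsilon^{\vee}\omega_E)^{-1}\,\theta_0^{\vee}{}^{-1}\,\omega_M\colon M\to E.
\]
Naturality of $\omega$ gives $\psi^{\vee\vee}\omega_M=\omega_E\psi$. Dualizing the identity $\theta_0\psi^{\vee}\epsilon=\varpi$ then yields $\varpi'\theta=\psi$, which is diagram~\eqref{eq20260917e}.

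The main obstacle is this bookkeeping with the identifications. The key check is that $\epsilon^{\vee}\omega_E\colon E\to R^{\vee}$, $e\mapsto(r\mapsto re)$, is the standard isomorphism and commutes with multiplication by $x$. This holds by $R$-linearity.
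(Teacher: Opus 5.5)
Your proposal is correct and follows the paper's proof: you dualize $\psi$ to an irreducible monomorphism $R\cong E^{\vee}\to M^{\vee}$ (the paper cites Proposition~\ref{lem20260914a} for this step), apply Theorem~\ref{thm20260913a}, and dualize the resulting diagram back. Your explicit bookkeeping with $\epsilon\colon R\to E^{\vee}$ and the natural isomorphisms $\omega$ makes precise the identifications $E^{\vee}\cong R$ and $\varpi^{\vee}=\varpi'$, which the paper leaves implicit.
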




\section{Irreducible monomorphisms from free modules}\label{sec05062016a}

In this section, we give the proof of Theorem~\ref{thm20260913a}. We start by providing an example that demonstrates the assumptions and conclusion of this theorem.

\begin{ex}\label{ex20260919a}
Let $k$ be a field and $R=k[\![x]\!]$ and consider the monomorphism $\phi\colon R\to R$ given by the multiplication by $x$. As $x$ is not a unit element in $R$, the map $\phi$ is neither a split monomorphism nor a split epimorphism. Now, let $R\xra{g}L\xra{h}R$ be a factorization of $\phi$. Note that $g$ is a monomorphism. If $\im(h)=R$, then $h$ is a split epimorphism. However, if $\im(h)\subseteq (x)$, then for every $\ell\in L$, there exists a unique element $r\in R$ such that $h(\ell)=xr$. Defining the $R$-module homomorphism $h'\colon L\to R$ by $h'(\ell)=r$, one can check that $h'g=\id_R$, that is, $g$ is a split monomorphism. Therefore, $\phi$ is an irreducible monomorphism. This matches Theorem~\ref{thm20260913a} as every irreducible monomorphism $R\to M$ is isomorphic to such a map $\phi$.
\end{ex}

The notation $\phi_N$ introduced in the following remark will be used throughout the subsequent results in this section without further comments.

\begin{disc}\label{disc20260911a}
Let $\phi\colon L\to M$ be a monomorphism of $R$-modules. Note that if $\pi\colon M\to \coker(\phi)$ is the natural surjection and $N$ is a submodule of $\coker(\phi)$, then $\im(\phi)\subseteq \pi^{-1}(N)$, that is, there is an induced inclusion map $\phi_N\colon L\to \pi^{-1}(N)$. If, in addition, we assume that $\phi$ is irreducible, then $\coker(\phi)$ is a non-zero $R$-module since $\phi$ is not an isomorphism.
\end{disc}

\begin{prop}\label{lem20260911a}
Let $\phi\colon F\to M$ be an irreducible monomorphism of $R$-modules with $F$ free and $\pi\colon M\to \coker(\phi)$ be the natural surjection. Then, for every proper submodule $N$ of $\coker(\phi)$, the inclusion map $\phi_N\colon F\to \pi^{-1}(N)$ is a split monomorphism and we have an isomorphism
\begin{equation}\label{eq20260911a}
\pi^{-1}(N)\cong F\oplus N.
\end{equation}
\end{prop}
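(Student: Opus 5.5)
The plan is to use irreducibility of $\phi$ applied to the factorization
$$F\xra{\phi_N}\pi^{-1}(N)\xra{j}M,$$
where $j$ is the natural inclusion. Clearly $j\phi_N=\phi$. By irreducibility, either $\phi_N$ is a split monomorphism or $j$ is a split epimorphism. The latter would force $j$ to be surjective, so $\pi^{-1}(N)=M$ and hence $N=\pi(\pi^{-1}(N))=\coker(\phi)$, using that $\pi$ is surjective. This contradicts properness of $N$. Therefore $\phi_N$ is a split monomorphism, which gives the first assertion.

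For the isomorphism \eqref{eq20260911a}, I will identify the cokernel of $\phi_N$. Since $\im(\phi)=\ker(\pi)\subseteq\pi^{-1}(N)$, restricting $\pi$ gives a surjection $\pi|\colon\pi^{-1}(N)\to N$ whose kernel is exactly $\ker(\pi)=\im(\phi)=\im(\phi_N)$. So there is a short exact sequence
$$0\to F\xra{\phi_N}\pi^{-1}(N)\xra{\pi|}N\to 0.$$
Because $\phi_N$ is a split monomorphism, this sequence splits, and so $\pi^{-1}(N)\cong F\oplus N$.

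Freeness of $F$ is not really needed for this argument; it is recorded for later use. The only point needing care is ruling out the alternative that $j$ is a split epimorphism, and this is immediate from surjectivity of $\pi$ together with $N\neq\coker(\phi)$. I expect no genuine obstacle: the argument is a direct application of the definition of irreducibility plus the splitting lemma.
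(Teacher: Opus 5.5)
Your proof is correct and follows essentially the same route as the paper: factor $\phi$ through $\pi^{-1}(N)$, rule out the inclusion being a split epimorphism because that would force $N=\coker(\phi)$, and then split the short exact sequence $0\to F\to \pi^{-1}(N)\to N\to 0$. Your side remark is also accurate: freeness of $F$ plays no role in this particular argument.
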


\begin{proof}
By our discussion in Remark~\ref{disc20260911a}, note that $\im(\phi)\subseteq \pi^{-1}(N)\subseteq M$. We then have a factorization $F\xra{\phi_N} \pi^{-1}(N)\xra{h} M$ of $\phi$, where $h$ is the natural inclusion. If $h$ is a split epimorphism, then it is an isomorphism and thus, $\pi^{-1}(N)=M$. This implies that $N=\coker(\phi)$, which is a contradiction. Therefore, by our assumption we conclude that $\phi_N$ is a split monomorphism.
Moreover, we have a split short exact sequence
$0\to F\xra{\phi_N} \pi^{-1}(N)\xra{\pi} N\to 0$ which implies the isomorphism~\eqref{eq20260911a}.
\end{proof}

\begin{cor}\label{thm20260910a}
Let $\phi\colon F\to M$ be an irreducible monomorphism in the category of finitely generated $R$-modules with $F$ free and $\pi\colon M\to \coker(\phi)$ be the natural surjection. Then, for every positive integer $n$ we have an isomorphism
$$
F\oplus \fm^n \coker(\phi)\cong \pi^{-1}(\fm^n \coker(\phi)).
$$
\end{cor}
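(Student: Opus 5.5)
The plan is to obtain this directly from Proposition~\ref{lem20260911a} by taking $N=\fm^n\coker(\phi)$. What has to be checked is that this $N$ is a \emph{proper} submodule of $\coker(\phi)$ for every positive integer $n$.

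\textbf{Step 1: $\coker(\phi)$ is a non-zero finitely generated module.} Since $\phi$ is irreducible, it is not an isomorphism, so by Remark~\ref{disc20260911a} we have $C:=\coker(\phi)\neq 0$. Because we work in the category of finitely generated $R$-modules, $M$ is finitely generated. Hence its quotient $C$ is finitely generated as well.

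\textbf{Step 2: $\fm^n C$ is proper.} For $n\geq 1$ we have $\fm^n C\subseteq \fm C$. By Nakayama's lemma, $\fm C\neq C$ because $C$ is non-zero and finitely generated. Therefore $\fm^n C\subsetneq C$. This is the only point where the finite generation hypothesis is genuinely needed, and it is the only real step.

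\textbf{Step 3: apply Proposition~\ref{lem20260911a}.} With $N=\fm^n C$, the proposition says that $\phi_N\colon F\to \pi^{-1}(\fm^n C)$ is a split monomorphism. It also gives the isomorphism $\pi^{-1}(\fm^n C)\cong F\oplus \fm^n C$, which is exactly the claim.
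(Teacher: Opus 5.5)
Your proposal is correct and follows the paper's argument: take $N=\fm^n\coker(\phi)$, use Nakayama's lemma (with $\coker(\phi)\neq 0$ finitely generated) to see that $N$ is proper, and then apply Proposition~\ref{lem20260911a}. One point is left implicit, as it is in the paper: $\pi^{-1}(N)\subseteq M$ is finitely generated because $R$ is noetherian, so the factorization used in that proposition stays inside the category of finitely generated modules.
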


\begin{proof}
By Nakayama's Lemma, $\fm^n \coker(\phi)$ is a proper submodule of $\coker(\phi)$. The assertion now follows from Proposition~\ref{lem20260911a} by setting $N=\fm^n \coker(\phi)$. 
\end{proof}

Proposition~\ref{lem20260911a} does not provide a complete classification of the type obtained in~\cite[Theorems A and B]{NT}. Instead, Theorem~\ref{thm20260913a} provides a satisfactory classification when $R$ is a discrete valuation ring. Before proving this theorem, we record the following remark. Again, the notation introduced in this remark will be used freely in what follows.

\begin{disc}\label{disc20260912a}
Let $(R,xR)$ be a discrete valuation ring and $M$ be a finitely generated $R$-module. By the structure theorem for finitely generated modules over a principal ideal domain, $M$ has a decomposition
\begin{equation}\label{eq20260919a}
M\cong F\oplus \tau(M)
\end{equation}
where $F$ is a finitely generated free $R$-module and $\tau(M)$ denotes the torsion submodule of $M$. If $\tau(M)\neq 0$, then there exist positive integers $s, a_1,\ldots,a_s$ with
\begin{equation}\label{eq20260919b}
\tau(M)\cong \bigoplus_{i=1}^s\left(R/(x^{a_i})\right).
\end{equation}
Note that $F\cong M/\tau(M)$ and using the isomorphism~\eqref{eq20260919b}, for every element $\ell\in \tau(M)$ there exists a positive integer $n_{\ell}$ such that $x^{n_{\ell}}\ell=0$.
\end{disc}

\noindent \emph{Proof of Theorem~\ref{thm20260913a}.}
Let $m=\phi(1)$ and note that $\phi(r)=rm$ for all elements $r\in R$. We identify $M$ by $F\oplus \tau(M)$ under the isomorphism~\eqref{eq20260919a}, where $F=R^{\oplus n}$ for a non-negative integer $n$, and write $m=(f,\ell)$ with $f\in F$ and $\ell\in \tau(M)$. Let $\pi\colon M\to \coker(\phi)$ be the natural surjection. The proof of Theorem~\ref{thm20260913a} is now given step by step as follows.
\vspace{2mm}

Step 1: If $f=0$, then $m=(0,\ell)$. In particular, $\ell\neq 0$ as $\phi$ is not the zero map and thus, there is a positive integer $n_{\ell}$ such that $x^{n_{\ell}}m=0$. This contradicts the fact that $\phi$ is a monomorphism. Hence, $f\neq 0$ and therefore, $n\geq 1$.
\vspace{2mm}
 
Step 2: The element $f$ is not primitive (i.e., it cannot be extended to a basis for the free $R$-module $F$). To see this, assume on the contrary that it is primitive. Then, $Rf$ is a free direct summand of $F$ and considering the natural surjection $\psi\colon M\to Rf$, the composition $\psi\phi\colon R\to Rf$ has the property that $\psi\phi(1)=f$. Identifying $Rf$ under the isomorphism $Rf\cong R$, the composition $\psi\phi\colon R\to R$ is then an isomorphism, which contradicts the fact that $\phi$ is irreducible.
\vspace{2mm}

Step 3: Suppose that $n\geq 2$. Since, by Step 2, $f$ is not primitive, we have $f=x^df'$, where $f'\in F$ is primitive and $d$ is a positive integer. Moreover, we can write $F=Rf'\oplus R^{\oplus(n-1)}$ and $m=x^d(f',0)+(0,\ell)$ is an element in $N'=Rf'\oplus \tau(M)$. Hence, $\im(\phi)\subseteq N'$. Note that $N'$ is a proper submodule of $M$ and thus, $N=\pi(N')$ is a proper submodule of $\coker(\phi)$. Then, by Proposition~\ref{lem20260911a}, the inclusion map $\phi_N\colon R\to N'$ is a split monomorphism. Hence, there is an $R$-module homomorphism $\rho\colon N'\to R$ such that $\rho\phi_N=\id_R$. As $M=N'\oplus R^{\oplus(n-1)}$, the natural inclusion $\varepsilon\colon N'\to M$ is a split monomorphism with $\lambda \varepsilon=\id_{N'}$, where $\lambda\colon M\to N'$ is the natural surjection. Note that $\rho \lambda\colon M\to R$ and we have
$$
\rho \lambda \phi=\rho \lambda \varepsilon \phi_N=\rho \id_{N'} \phi_N= \rho \phi_N=\id_R.
$$
This means that $\phi$ is a split monomorphism, which is a contradiction. Therefore, our assumption that $n\geq 2$ is not valid and hence, it follows from Step 1 that $n=1$. Moreover, in this situation, we can rewrite $m$ as $m=(x^d,\ell)$ with $d\geq 1$.
\vspace{2mm}

Step 4: Using $M=R\oplus \tau(M)$, write $\phi=\begin{pmatrix}\phi_1& \phi_2\end{pmatrix}^{tr}$ with $\phi_1\colon R\to R$ and $\phi_2\colon R\to \tau(M)$ defined by $\phi_1(r)=rx^d$ and $\phi_2(r)=r\ell$ for all elements $r\in R$. It follows from~\cite[Lemma 2.3(b)]{NT} that $\phi_1$ is irreducible. If $d\geq 2$, then $R\xra{g}R\xra{h} R$ is a factorization of $\phi_1$, where $g$ is the multiplication map by $x^{d-1}$ and $h$ is the multiplication map by $x$. However, none of these maps can be split because $x$ is not unit. This contradiction implies that $d=1$, and hence, $m=(x,\ell)$.
\vspace{2mm}

Step 5: Assume that $\tau(M)\neq 0$. Let $\ell=x\ell'$ for some $\ell'\in \tau(M)$. Then, $m=xu$, where $u=(1,\ell')$, and we have the factorization $R\xra{g'}R\xra{h'}M$ of $\phi$ in which $g'$ is the multiplication map by $x$ and $h'$ is defined by $h'(r)=ru$ for all $r\in R$. Since $g'$ is not a split monomorphism and $\phi$ is irreducible, $h'$ must be a split epimorphism. Note that $h'$ is a monomorphism as well and hence, it is an isomorphism, that is, $M\cong R$. This is impossible as we assume that $\tau(M)\neq 0$. Therefore, if $\tau(M)\neq 0$, then $\ell\notin x\tau(M)$. This means that $\ell$ is part of some minimal generating set of $\tau(M)$.\vspace{2mm}

Step 6: Suppose $\mu_R(\tau(M))\geq 2$. It follows from Step 5 that $\tau(M)=R\ell\oplus X$, where $X$ is a non-zero submodule of $\tau(M)$. In this situation, $L'=R\oplus R\ell$ is a proper submodule of $M$ and we have $m=(x,\ell)\in L'$. Hence, $\im(\phi)\subseteq L'$ and $L=\pi(L')$ is a proper submodule of $\coker(\phi)$. By an argument analogous to that in Step 3 we conclude that $\phi$ is a split monomorphism, which is a contradiction. Thus, $\mu_R(\tau(M))\leq 1$.\vspace{2mm}

Step 7: If $\tau(M)\neq 0$, then by Step 6 we have $\mu_R(\tau(M))=1$, that is, $\tau(M)$ is a cyclic $R$-module. Note that by Step 5, the element $\ell$ is a generator for $\tau(M)$. Therefore, we obtain an isomorphism
$$
\tau(M)\cong R/(x^c)
$$
for a positive integer $c$ under which $\ell\in \tau(M)$ corresponds to $\overline{1}\in R/(x^c)$. This yields the isomorphism $\theta\colon M\xra{\cong} R\oplus R/(x^c)$ defined by $\theta(x,\ell)=(x,\overline{1})$ and consequently, we obtain a commutative diagram
\begin{equation}\label{eq20260930a}
\xymatrix{
R\ar[rr]^{\phi}\ar@{=}[d]&&M\ar[d]^-{\theta}\\
R\ar[rr]^-{\varpi_c}&&R\oplus R/(x^c)
}
\end{equation}
in which for every element $r\in R$ we have $\varpi_c(r)=(rx,\overline{r})$. Note that $\varpi_c$ is irreducible because $\phi$ is irreducible.\vspace{2mm}

Step 8: If $\tau(M)\neq 0$, then setting $L=R\oplus R/(x^{c+1})$, the map $\varpi_c$ from Step 7 has a factorization $R\xra{g} L\xra{h} R\oplus R/(x^c)$, where for any element $r\in R$ we have $g(r)=(rx,\overline{r}_{c+1})$ with $\overline{r}_{c+1}$ denoting $r$ mod $(x^{c+1})$ and $h=\id_R\oplus\pi_c$ with $\pi_c\colon R/(x^{c+1})\to R/(x^c)$ being the natural surjection. Note that $g$ is not a split monomorphism. In fact, any $\varrho\colon L\to R$ vanishes on the torsion summand and thus, $\varrho(g(1))=\varrho(x,\overline{1}_{c+1})=\varrho(x,\overline{0}_{c+1})=x\varrho(1,\overline{0}_{c+1})\in(x)$. Therefore, $\varrho g\neq \id_R$. Also, $h$ is not a split epimorphism. In fact, any map $\zeta\colon R/(x^{c})\to R/(x^{c+1})$ with $\zeta\pi_c=\id_{R/(x^{c})}$ satisfies $x^c\zeta(\overline{1})=0$, that is, $\zeta(\overline{1})\in(x)/(x^{c+1})$. However, then we have $\pi_c(\zeta(\overline{1}))\in(x)/(x^c)$ which means $\pi_c(\zeta(\overline{1}))\neq \overline{1}$, a contradiction.

Thus, neither $g$ nor $h$ splits, contradicting irreducibility of $\varpi_c$. Hence, $\tau(M)=0$ and consequently, we obtain the commutative diagram~\eqref{eq20260913c} from~\eqref{eq20260930a}.
\qed

\section{Irreducible homomorphisms to/from injective modules}\label{sec20260919a}

This section is entirely devoted to the proofs of Theorems~\ref{thm20260919a}, \ref{thm20260917a}, and~\ref{thm20260917c}. The main tool used throughout the proofs is Matlis duality, for which we provide some preliminary background before proceeding to the arguments; see~\cite{bruns, M}.

\begin{disc}\label{para20260915c}
Let $M$ be an $R$-module. Then, the canonical $R$-module homomorphism $\chi_M\colon M\to M^{\vee\vee}$ is injective. Also, if $f\colon M\to N$ is an $R$-module homomorphism, then the diagram
\begin{equation}\label{eq20260915a}
\xymatrix{
M\ar[rr]^f\ar[d]_-{\chi_M}&&N\ar[d]^-{\chi_N}\\
M^{\vee\vee}\ar[rr]^{f^{\vee\vee}}&&N^{\vee\vee}
}
\end{equation}
is commutative. Recall that the $R$-module $M$ is called \emph{Matlis reflexive} if $\chi_M$ is an isomorphism, that is, $M^{\vee\vee}\cong M$. Note that if $R$ is complete, then all finitely generated and artinian $R$-modules are Matlis reflexive.
\end{disc}

\begin{disc}\label{para20260915b}
If $R$ is complete, then the Matlis dual functor $(-)^{\vee}$ is a contravariant equivalence between the category of finitely generated and that of artinian $R$-modules. Moreover, this functor is full and faithful, that is, it yields a bijection
$$
\Phi_{M,N}\colon \Hom_R(M,N)\to \Hom_R(N^{\vee},M^{\vee})
$$
defined by $\Phi_{M,N}(f)=f^{\vee}$ between the Hom-set of the finitely generated $R$-modules $M, N$ and the Hom-set of their artinian duals $M^{\vee}, N^{\vee}$.
\end{disc}

\begin{disc}\label{para20260914b}
Let $M$ be a Matlis reflexive $R$-module. Then, $M$ is indecomposable if and only if so is $M^{\vee}$. Moreover, the dualization induces an anti-isomorphism $\End_R(M)\cong \End_R(M^{\vee})^{op}$. Therefore, $\End_R(M)$ is a local ring if and only if so is $\End_R(M^{\vee})$.
\end{disc}

Using the basic categorical properties of the Matlis dual functor $(-)^{\vee}$ and diagram~\eqref{eq20260915a}, it is straightforward to check the following lemma.

\begin{lem}\label{lem20260915a}
Let $f\colon M\to N$ be an $R$-module homomorphism. Then, the following assertions hold.
\begin{enumerate}[\rm(a)]
\item
If $f$ is a split monomorphism (resp. epimorphism), then $f^{\vee}$ is a split epimorphism (resp. monomorphism).
\item
If $M$ and $N$ are Matlis reflexive and $f^{\vee}$ is a split monomorphism (resp. epimorphism), then $f$ is a split epimorphism (resp. monomorphism).
\end{enumerate}
\end{lem}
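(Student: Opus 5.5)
The plan is to use only the functoriality of $(-)^{\vee}=\Hom_R(-,E)$ for (a), and for (b) to combine that with the natural injection $\chi$ and the commutative square~\eqref{eq20260915a}.

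For part (a), suppose first that $f$ is a split monomorphism, so there is $g\colon N\to M$ with $gf=\id_M$. Applying the contravariant additive functor $(-)^{\vee}$ gives $f^{\vee}g^{\vee}=(gf)^{\vee}=\id_{M^{\vee}}$. Hence $f^{\vee}\colon N^{\vee}\to M^{\vee}$ is a split epimorphism. Symmetrically, if $f$ is a split epimorphism with $fh=\id_N$, then $h^{\vee}f^{\vee}=\id_{N^{\vee}}$, so $f^{\vee}$ is a split monomorphism.

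For part (b), assume $M$ and $N$ are Matlis reflexive, so $\chi_M$ and $\chi_N$ are isomorphisms. Suppose $f^{\vee}$ is a split monomorphism.
\begin{enumerate}[\rm(i)]
\item Apply part (a) to the map $f^{\vee}\colon N^{\vee}\to M^{\vee}$. This shows that $f^{\vee\vee}\colon M^{\vee\vee}\to N^{\vee\vee}$ is a split epimorphism, say $f^{\vee\vee}s=\id_{N^{\vee\vee}}$.
\item By the commutativity of~\eqref{eq20260915a}, we have $f=\chi_N^{-1}f^{\vee\vee}\chi_M$.
\item Set $t=\chi_M^{-1}s\chi_N\colon N\to M$. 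Then
$$
ft=\chi_N^{-1}f^{\vee\vee}\chi_M\chi_M^{-1}s\chi_N=\chi_N^{-1}f^{\vee\vee}s\chi_N=\chi_N^{-1}\chi_N=\id_N,
$$
so $f$ is a split epimorphism.
\end{enumerate}
The case where $f^{\vee}$ is a split epimorphism is identical: part (a) makes $f^{\vee\vee}$ a split monomorphism, and the same conjugation by $\chi_M$ and $\chi_N$ transfers a left inverse of $f^{\vee\vee}$ to a left inverse of $f$.

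There is no genuine obstacle here. The only point needing care is that the transfer in (b) uses that the $\chi$'s are isomorphisms, which is exactly where Matlis reflexivity enters. The naturality square~\eqref{eq20260915a} is what lets us replace $f$ by the conjugate of $f^{\vee\vee}$.
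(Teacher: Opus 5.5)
Your proof is correct and takes the route the paper indicates. Part (a) is functoriality of $(-)^{\vee}$. Part (b) applies (a) to $f^{\vee}$ and transfers the splitting of $f^{\vee\vee}$ back to $f$ using the naturality square~\eqref{eq20260915a} and the isomorphisms $\chi_M,\chi_N$; the paper leaves these details as a straightforward check.
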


\begin{prop}\label{lem20260914a}
Assume that $R$ is complete, and let $f\colon M\to N$ be homomorphism of finitely generated (resp. artinian) $R$-modules. Then, $f$ is irreducible in the category of finitely generated (resp. artinian) $R$-modules if and only if $f^{\vee}$ is irreducible in the category of artinian (resp. finitely generated) $R$-modules.
\end{prop}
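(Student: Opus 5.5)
By symmetry it suffices to prove one direction in full and then dualize. I will first show that if $f\colon M\to N$ is irreducible among finitely generated modules, then $f^{\vee}\colon N^{\vee}\to M^{\vee}$ is irreducible among artinian modules. The artinian-to-finitely-generated direction is word-for-word the same, with the roles of the two categories exchanged. The converse implications come from applying the direct implication to $f^{\vee}$ and using the commutative square~\eqref{eq20260915a}. Since $R$ is complete, $\chi_M$ and $\chi_N$ are isomorphisms (Remark~\ref{para20260915c}), so $f^{\vee\vee}$ is isomorphic to $f$. Irreducibility is preserved under composing with isomorphisms on both ends, because split monomorphisms and split epimorphisms are stable under such compositions.

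The first step is the non-splitting condition. Suppose $f^{\vee}$ were a split monomorphism or a split epimorphism. The modules $M$ and $N$ are Matlis reflexive, so Lemma~\ref{lem20260915a}(b) shows that $f$ would be a split epimorphism or a split monomorphism, respectively. This contradicts the irreducibility of $f$.

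The second step is the factorization condition. Take a factorization $N^{\vee}\xra{g}L\xra{h}M^{\vee}$ of $f^{\vee}$ with $L$ artinian. By Remark~\ref{para20260915b}, $L\cong L_0^{\vee}$ for some finitely generated module $L_0$; in fact one may take $L_0=L^{\vee}$, since $L\cong L^{\vee\vee}$. Fullness of duality gives maps $g_0\colon L_0\to N$ and $h_0\colon M\to L_0$ with $g_0^{\vee}=g$ and $h_0^{\vee}=h$, after identifying $L$ with $L_0^{\vee}$. Then $(g_0h_0)^{\vee}=h_0^{\vee}g_0^{\vee}=hg=f^{\vee}$. Faithfulness of duality (Remark~\ref{para20260915b}) now gives $g_0h_0=f$, so $M\xra{h_0}L_0\xra{g_0}N$ is a factorization of $f$ in the finitely generated category.

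Irreducibility of $f$ says that either $h_0$ is a split monomorphism or $g_0$ is a split epimorphism. By Lemma~\ref{lem20260915a}(a), this means either $h=h_0^{\vee}$ is a split epimorphism or $g=g_0^{\vee}$ is a split monomorphism. That is exactly the condition required of the factorization of $f^{\vee}$, so $f^{\vee}$ is irreducible.

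The only delicate point is the bookkeeping in the second step. The identification $L\cong L_0^{\vee}$ has to be carried through consistently: replace $g$ by $\sigma g$ and $h$ by $h\sigma^{-1}$ for the chosen isomorphism $\sigma\colon L\to L_0^{\vee}$. Splitting properties are unaffected by composing with $\sigma$, so nothing is lost. Beyond this, the argument is purely formal.
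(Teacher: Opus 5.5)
Your proof is correct and follows the paper's route. Non-splitting of $f^{\vee}$ comes from Lemma~\ref{lem20260915a}(b); a factorization of $f^{\vee}$ through an artinian $L$ is lifted via full faithfulness to a factorization of $f$ through $L^{\vee}$, and splitness is transferred back with Lemma~\ref{lem20260915a}(a). Your explicit handling of the identification $L\cong L^{\vee\vee}$, and your derivation of the converse by applying the forward direction to $f^{\vee}$ and transporting along $\chi_M,\chi_N$, are more careful versions of what the paper leaves to ``a similar argument.''
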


\begin{proof}
Assume that $f$ is irreducible in the category of finitely generated $R$-modules. It follows from Lemma~\ref{lem20260915a}(b) that $f^{\vee}$ is neither a split monomorphism nor a split epimorphism. Now, let $N^{\vee}\xra{h'}L\xra{g'}M^{\vee}$ be a factorization of $f^{\vee}$ through an artinian $R$-module $L$. Note that $L^{\vee}$ is a finitely generated $R$-module and by our discussion in Remark~\ref{para20260915b} there exist unique $R$-module homomorphisms $g\colon M\to L^{\vee}$ and $h\colon L^{\vee}\to N$ with $g'=g^{\vee}$ and $h'=h^{\vee}$ such that $M\xra{g}L^{\vee}\xra{h}N$ is a factorization of $f$. By our assumption, $g$ is a split monomorphism or $h$ is a split epimorphism. Hence, by Lemma~\ref{lem20260915a}(a) we conclude that $h'$ is a split monomorphism or $g'$ is a split epimorphism. Thus, $f^{\vee}$ is irreducible in the category of artinian $R$-modules, as desired.

The converse of this proposition follows by a similar argument.

The case where the $R$-module homomorphism $f\colon M\to N$ is irreducible in the category of artinian $R$-modules if and only if $f^{\vee}$ is irreducible in the category of finitely generated $R$-modules is treated similarly.
\end{proof}

\begin{disc}
Note that $\soc_R(E)\cong k$ and we have the isomorphisms
\begin{equation}\label{eq20260915b}
\fm^{\vee}\cong \overline{E}\cong E/k.
\end{equation}
\end{disc}

\begin{lem}\label{lem20260915b}
Let $f\colon M\to N$ be an $R$-module homomorphism. Then, $\im(f)\subseteq \fm N$ if and only if $\soc_R(N^{\vee})\subseteq \ker(f^{\vee})$.
\end{lem}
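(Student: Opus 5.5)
The plan is to reduce both conditions to a statement about the composite $\fm\otimes$-type maps, or more concretely to a statement about elements of $\Hom_R(N,E)$. Recall that $\soc_R(N^{\vee})=\Hom_R(N/\fm N,E)$, viewed inside $N^{\vee}=\Hom_R(N,E)$. This identification holds because a homomorphism $\gamma\colon N\to E$ is killed by $\fm$ exactly when $\fm\gamma(N)=\gamma(\fm N)=0$, that is, when $\gamma$ vanishes on $\fm N$. Meanwhile $\ker(f^{\vee})$ consists of those $\gamma\in N^{\vee}$ with $\gamma f=0$, i.e.\ those $\gamma$ vanishing on $\im(f)$. So the statement becomes: $\im(f)\subseteq\fm N$ if and only if every $\gamma\colon N\to E$ vanishing on $\fm N$ also vanishes on $\im(f)$.

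\emph{Forward direction.} If $\im(f)\subseteq \fm N$, then any $\gamma$ vanishing on $\fm N$ vanishes on $\im(f)$. Hence $\gamma f=0$, giving $\soc_R(N^{\vee})\subseteq\ker(f^{\vee})$.

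\emph{Converse.} Suppose $\im(f)\not\subseteq\fm N$, and pick $y\in\im(f)$ with $\bar y\neq 0$ in $N/\fm N$. The goal is a homomorphism $\gamma$ vanishing on $\fm N$ with $\gamma(y)\neq0$; such a $\gamma$ lies in $\soc_R(N^{\vee})\setminus\ker(f^{\vee})$, a contradiction. The steps are:
\begin{enumerate}[\rm(1)]
\item Since $N/\fm N$ is a $k$-vector space, choose a $k$-linear functional $\lambda\colon N/\fm N\to k$ with $\lambda(\bar y)=1$. This uses a basis extension, or Zorn's lemma if the dimension is infinite; no finiteness of $N$ is needed.
\item Compose to get $\gamma\colon N\to N/\fm N\xra{\lambda} k\cong\soc_R(E)\hookrightarrow E$.
\end{enumerate}
Then $\gamma$ is $R$-linear, kills $\fm N$, and satisfies $\gamma(y)\neq 0$, as required.

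I expect no real obstacle here. The only points needing care are the identification of $\soc_R(N^{\vee})$ with the maps factoring through $N/\fm N$, and the existence of the functional $\lambda$. That existence is elementary linear algebra, and the whole argument needs no completeness or finiteness hypotheses on $R$ or on the modules.
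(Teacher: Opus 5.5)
Your proof is correct and follows essentially the same route as the paper: both identify $\soc_R(N^{\vee})$ with the image of $\pi^{\vee}$ for the natural surjection $\pi\colon N\to N/\fm N$ (the maps $N\to E$ vanishing on $\fm N$), and both reduce the statement to whether all such maps kill $\im(f)$. Your explicit functional $\gamma$ in the converse spells out a step the paper uses without comment: $(\pi f)^{\vee}=0$ forces $\pi f=0$ because $E$ is a cogenerator.
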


\begin{proof}
Let $\pi\colon N\to N/\fm N$ be the natural surjection. Note that $\im(f)\subseteq \fm N$ if and only if $\pi f=0$ if and only if $f^{\vee}\pi^{\vee}=(\pi f)^{\vee}=0$ if and only if $\im(\pi^{\vee})\subseteq \ker(f^{\vee})$. On the other hand, we have
\begin{equation}\label{eq20260915c}
\im(\pi^{\vee})=\left(N/\fm N\right)^{\vee}\cong \soc_R(N^{\vee}).
\end{equation}
This completes the proof of the lemma.
\end{proof}

\noindent \emph{Proof of Theorem~\ref{thm20260919a}.}
(a) By~\eqref{eq20260915b} we obtain a non-trivial direct sum decomposition $\fm=J\oplus P$, where $J^{\vee}=J'$. By~\cite[Theorem A(a)]{NT}, the inclusion map $\delta\colon J\hookrightarrow R$ is an irreducible monomorphism of finitely generated $R$-modules which admits the factorization $J\hookrightarrow \fm\hookrightarrow R$ of natural inclusions whose Matlis dual identifies the diagram $E\to \overline{E}\to J'$, that is, $\delta^{\vee}=\delta'$. Thus, by Proposition~\ref{lem20260914a}, the map $\delta'$ is an irreducible epimorphism in the category of artinian $R$-modules.

(b) Let $I\cong E^{\oplus n}$ for some positive integer $n$. By Proposition~\ref{lem20260914a}, the map $\phi=\psi^{\vee}\colon N\to R^{\oplus n}$ is an irreducible monomorphism in the category of finitely generated $R$-modules, where $N=M^{\vee}$. Moreover, it follows from our assumption and Lemma~\ref{lem20260915b} that $\im(\phi)\subseteq \fm R^{\oplus n}$. Thus, by~\cite[Theorem A(b)]{NT}, the $R$-module $N$ is isomorphic to a direct summand of $\m$. Taking Matlis dual, this implies that the $R$-module $M$ is isomorphic to a direct summand of $\overline{E}$. Moreover, if $M$ is indecomposable, then $N$ is indecomposable and again, by~\cite[Theorem A(b)]{NT} we get a commutative diagram
\begin{equation}\label{eq20260915e}
\xymatrix{
N\ar[rr]^{\phi}\ar[d]_{\eta}&&R^{\oplus n}\ar[d]^{\iota^{\vee}}\\
\fm\ar[rr]^{\theta}&&R
}
\end{equation}
in which $\eta$ is a split monomorphism and  $\theta$ is the inclusion map. The existence of diagram~\eqref{eq20260915d} now follows from taking Matlis dual of diagram~\eqref{eq20260915e}.
\qed\vspace{2mm}

Here is an example that falls into the situation of Theorem~\ref{thm20260919a}.

\begin{ex}
Let $R=k[\![X,Y]\!]/(X,Y)^2$ and use lower-case letters $x,y$ to represent the residues of the variables $X,Y$ in $R$. Note that $\fm^2=0$ and thus, $\dim_k(\fm)=2$. Realizing $E$ via the isomorphism $E\cong \Hom_k(R,k)$, the basis for the $k$-vector space $E$ is the dual basis $\{1^*,x^*,y^*\}$. Note that $\soc_R(E)=k1^*$ and $\overline{E}\cong kx^*\oplus ky^*$.

Let $J'=kx^*$, which is a direct summand of $\overline{E}$, and let $\delta'\colon E\xra{\theta'}\overline{E}\twoheadrightarrow J'$ be the composition of the natural surjections, i.e., $\delta'(1^*)=0$, $\delta'(x^*)=1$, and $\delta'(y^*)=0$, where the outputs are in $k$. Indeed, $\theta'(1^*)=0$, $\theta'(x^*)=x^*$, and $\theta'(y^*)=y^*$, and the projection onto $J'$ under the identification $J'\cong k$ gives $\delta'(x^*)=1$ as a scalar in $k$. By Theorem~\ref{thm20260919a}(a), the map $\delta'$ is an irreducible epimorphism in the category of artinian $R$-modules.

To demonstrate Theorem~\ref{thm20260919a}(b), let $I=E$, $M=J'$, and $\psi=\delta'$ and note that $\soc_R(I)=k1^*\subseteq \ker(\delta')=k1^*\oplus ky^*$. As $\End_R(E)\cong R$, every monomorphism $\iota\colon E\to E$ is multiplication by a unit $r\in R$. Write $r=a+bx+cy$ with $a,b,c\in k$ and $a\neq 0$. It can be checked that $\delta'\iota=a\cdot \delta'$ and therefore, $\eta'\colon \overline{E}\to J'$ is the map defined by $\eta'(x^*)=a$ and $\eta'(y^*)=0$. Moreover, the diagram~\eqref{eq20260915d} commutes for every unit element $r\in R$.
\end{ex}

The proof of Theorem~\ref{thm20260917a} is given after the following remark.

\begin{disc}\label{disc20260917a}
In the setting of Theorem~\ref{thm20260917a}(b2), note that
$$
\Hom_R(E,k)\cong \Hom_R(E,\Hom_R(k,E))\cong \Hom_R(k,\Hom_R(E,E))\cong \Hom_R(k,R).
$$
Therefore, we get the equalities
$$
\dim_k(\Hom_R(E,k))=\dim_k(\soc(R))=1.
$$
Thus, all non-zero surjections $E\to k$ are scalar multiples of one another, that is, they share the same kernel. Hence, $N$ is the unique maximal submodule of $E$. 
\end{disc}

\noindent \emph{Proof of Theorem~\ref{thm20260917a}.}
(a) By Proposition~\ref{lem20260914a}, the map $\psi^{\vee}\colon R^{\oplus n}\to M^{\vee}$ is an irreducible epimorphism in the category of finitely generated $R$-modules. Thus, by~\cite[Theorem B(a)]{NT} we have $\ker(\psi^{\vee})\cong k$, that is, we have a short exact sequence
\begin{equation}\label{eq20260917d}
0\to k\to R^{\oplus n}\xra{\psi^{\vee}} M^{\vee}\to 0
\end{equation}
of $R$-modules. Taking Matlis dual and using the isomorphism $k^{\vee}\cong k$, we obtain a short exact sequence
$$
0\to M\xra{\psi} E^{\oplus n}\to k\to 0
$$
which implies that $\coker(\psi)\cong k$, as desired.

(b) Since $\psi^{\vee}\colon R^{\oplus n}\to M^{\vee}$ is an irreducible epimorphism in the category of finitely generated $R$-modules and $\pi^{\vee}\colon R\to R^{\oplus n}$ is a split monomorphism, the fact that $\End_R(M^{\vee})$ is a local ring along with~\cite[Theorem B(b1)]{NT} implies that the map $(\pi \psi)^{\vee}=\psi^{\vee}\pi^{\vee}$ is irreducible. Thus, by Proposition~\ref{lem20260914a}, the map $\pi \psi$ is irreducible. Moreover, if $\pi \psi$ is injective, or equivalently, $(\pi \psi)^{\vee}$ is surjective, then by~\cite[Theorem B(b2)]{NT} the ring $R$ has type $1$, $n=1$, and there is a commutative diagram
\begin{equation}\label{eq20260917b}
\xymatrix{
R^{\oplus n}\ar[rr]^{\psi^{\vee}}&&M^{\vee}\\
R\ar[rr]^-{\pi'}\ar[u]^{\beta^{\vee}}&&R/\soc(R)\ar[u]_{\rho}
}
\end{equation}
where $\rho$ is an isomorphism and $\pi'$ is the natural surjection. Taking Matlis dual of diagram~\eqref{eq20260917b} we obtain the diagram
\begin{equation}\label{eq20260917c}
\xymatrix{
M\ar[rr]^{\psi}\ar[d]_-{\rho^{\vee}}&&I\ar[d]^-{\beta}\\
(R/\soc(R))^{\vee}\ar[rr]^-{\pi'^{\vee}}&&E
}
\end{equation}
in which $\pi'^{\vee}$ is the inclusion map and $\rho^{\vee}$ is an isomorphism. On the other hand, note that $\depth(R)=0$ because of the existence of the short exact sequence~\eqref{eq20260917d}. Therefore, $\dim_k(\soc(R))=1$ because it is equal to type of $R$. Hence, $\soc(R)\cong k$ and taking Matlis dual of the short exact sequence
$$
0\to \soc(R)\to R\to R/\soc(R)\to 0
$$
we obtain a short exact sequence
$$
0\to \left(R/\soc(R)\right)^{\vee}\to E\to k\to 0.
$$
Our discussion in Remark~\ref{disc20260917a} implies that $N\cong \left(R/\soc(R)\right)^{\vee}$. Hence, the existence of diagram~\eqref{eq20260917a} follows from diagram~\eqref{eq20260917c} by setting $\varsigma=\pi'^{\vee}$ and $\alpha=\rho^{\vee}$.
\qed\vspace{2mm}

The following example falls into the situation of Theorem~\ref{thm20260917a}.

\begin{ex}
Let $R=k[\![X,Y]\!]/(X^2,Y^2)$ and use lower-case letters $x,y$ to represent the residues of the variables $X,Y$ in $R$. The ring $R$ is artinian and Gorenstein. Note that $\fm^2=kxy=\soc(R)$ and $E\cong R$. Let $I=E$ and under the isomorphism $E\cong R$ assume that $\psi\colon \fm\to E$ is the natural inclusion. By~\cite[Theorem A(a)]{NT}, the map $\psi$ is an irreducible monomorphism. Note that in this case $N=\ker(E\twoheadrightarrow k)=\fm$. Therefore, under the identification $I=E\cong R$, the maps $\psi$, $\alpha=\id_{\fm}$, and $\varsigma\colon \fm\hookrightarrow R$ fit into the diagram~\eqref{eq20260917a}.   
\end{ex}

The rest of this section is devoted to the proof of Theorem~\ref{thm20260917c}. The following result may be of independent interest. 

\begin{prop}\label{thm20260917b}
Let $\psi\colon M\to I$ be an irreducible epimorphism of $R$-modules with $I$ injective. Then, for every non-zero submodule $N$ of $\ker(\psi)$, the induced map $\psi_N\colon M/N\to I$ is a split epimorphism and we have an isomorphism
\begin{equation}\label{eq20260917f}
M/N\cong I\oplus (\ker(\psi)/N).
\end{equation}
\end{prop}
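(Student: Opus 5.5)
This is the dual of Proposition~\ref{lem20260911a}, and I would argue it directly rather than through Matlis duality, since no finiteness or completeness is assumed. Fix a non-zero submodule $N$ of $\ker(\psi)$ and let $p\colon M\to M/N$ be the natural surjection. Because $N\subseteq\ker(\psi)$, the map $\psi$ factors as $\psi=\psi_N p$, where $\psi_N\colon M/N\to I$ is the induced map. Irreducibility of $\psi$ then says that either $p$ is a split monomorphism or $\psi_N$ is a split epimorphism.

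Next I rule out the first alternative. A split monomorphism is injective, but $\ker(p)=N\neq 0$, so $p$ cannot split. Hence $\psi_N$ is a split epimorphism.

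For the isomorphism~\eqref{eq20260917f}, I note that $\psi_N$ is surjective, since $\psi$ is, and that $\ker(\psi_N)=\ker(\psi)/N$. This gives a short exact sequence
$$
0\to \ker(\psi)/N\to M/N\xra{\psi_N} I\to 0,
$$
which splits because $\psi_N$ is a split epimorphism. Therefore $M/N\cong I\oplus(\ker(\psi)/N)$.

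The argument is entirely formal, so I expect no real obstacle. The only point requiring care is to use that $N$ is non-zero to exclude the splitting of $p$; this plays the role that properness of $N$ played in Proposition~\ref{lem20260911a}. Injectivity of $I$ is not actually needed for the splitting, since it comes from irreducibility. Injectivity matters only later, because it guarantees that $\psi$, being irreducible, is not split.
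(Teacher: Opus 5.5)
Your proof is correct and follows the paper's argument exactly: factor $\psi=\psi_N p$ through $M/N$, use $N\neq 0$ to see that $p$ cannot be a split monomorphism, and split the resulting short exact sequence. Your closing remark is a little off, though: non-splitness of $\psi$ is already part of the definition of irreducibility. Injectivity of $I$ plays no role anywhere in this argument, just as freeness of $F$ plays no role in the proof of Proposition~\ref{lem20260911a}.
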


\begin{proof}
Note that $\psi$ has a factorization $M\xra{\pi} M/N\xra{\psi_N}I$, where $\pi$ is the natural surjection. Thus, either $\pi$ is a split monomorphism or $\psi_N$ is a split epimorphism. If $\pi$ is a split monomorphism, then it is an isomorphism which forces $N$ to be a zero submodule of $M$. This contradiction implies that $\psi_N$ is a split epimorphism. Moreover, we have a split short exact sequence
$0\to \ker(\psi)/N\to M/N\xra{\psi_N} I\to 0$ which implies the isomorphism~\eqref{eq20260917f}.
\end{proof}

\noindent \emph{Proof of Theorem~\ref{thm20260917c}.}
By Proposition~\ref{lem20260914a}, the map $\psi^{\vee}\colon R\to M^{\vee}$ is an irreducible monomorphism in the category of finitely generated $R$-modules. Then, by Theorem~\ref{thm20260913a}, there exists a commutative diagram
\begin{equation}\label{eq20260917g}
\xymatrix{
R\ar[rr]^{\psi^{\vee}}\ar@{=}[d]&&M^{\vee}\ar[d]^-{\cong}\\
R\ar[rr]^-{\varpi}&&R
}
\end{equation}
in which $\varpi$ is the multiplication by $x$. Note that $\varpi'=\varpi^{\vee}$. The existence of diagram~\eqref{eq20260917e} now follows from taking Matlis dual of diagram~\eqref{eq20260917g}.
\qed
\vspace{2mm}

\begin{ex}
Let $R=k[[x]]$ and $\psi: E\to E$ be multiplication by $x$. Dualizing Example~\ref{ex20260919a} (or arguing directly, using that $E$ is indecomposable and $x$ is a non-unit) shows $\psi$ is an irreducible epimorphism, matching Theorem~\ref{thm20260917c}.
\end{ex}

\subsection*{AI disclosure}
All mathematical content and research decisions in this work were directed by the author. Claude (Anthropic) was used only as a tool, under the author's direction, to draft candidate proofs and check arguments. The paper was written by the author, who takes full responsibility for all mathematical content.

\subsection*{Acknowledgments}
The author is very grateful to Akiyoshi Sannai for carefully reading the previous draft and pointing out an error in an example concerning Theorem~\ref{thm20260917c}. Correcting that example led the author to improved statements of Theorems~\ref{thm20260913a} and~\ref{thm20260917c}.

\end{document}